\newtheorem{theoa}{Théorème}
\DeclareMathOperator{\id}{id}
\DeclareMathOperator{\GL}{\sf GL}
\DeclareMathOperator{\PGL}{\sf PGL}
\DeclareMathOperator{\SO}{\sf SO}
\DeclareMathOperator{\Aut}{\sf Aut}
\DeclareMathOperator{\Diff}{\sf Diff}
\DeclareMathOperator{\h}{\sf h_{top}}
\DeclareMathOperator{\Fat}{\sf Fatou}
\DeclareMathOperator{\Ind}{\sf Ind}
\renewcommand\epsilon{\varepsilon}
\newcommand\N{\mathbf{N}}
\newcommand\Z{\mathbf{Z}}
\newcommand\Q{\mathbf{Q}}
\newcommand\R{\mathbf{R}}
\newcommand\C{\mathbf{C}}
\renewcommand\P{\mathbb{P}}
\renewcommand{\S}{\mathbb{S}}
\newcommand\T{\mathbb{T}}
\newcommand{\Rot}{\mathrm{Rot}}
\newcommand{\e}{\mathrm{e}}
\begin{document}

\title[Sur la dynamique des difféomorphismes birationnels des surfaces]{Sur la dynamique des difféomorphismes birationnels des surfaces algébriques réelles : ensemble de Fatou et lieu réel}
\author{Arnaud Moncet}
\address{Université de Rennes 1\\
IRMAR\\
campus de Beaulieu\\
bâtiment 22--23\\
263~avenue du général Leclerc\\
CS 74205\\
35042 Rennes cedex
}
\email{moncet.arnaud@gmail.com}
\date{\today}

\begin{abstract}
On s'intéresse aux difféomorphismes birationnels des surfaces algébri\-ques réelles qui possèdent une dynamique réelle simple et une dynamique complexe riche. On donne un exemple d'une telle transformation sur $\P^1\times\P^1$, mais on montre qu'une telle situation est exceptionnelle et impose des conditions fortes à la fois sur la topologie du lieu réel et sur la dynamique réelle.
\end{abstract}

\begin{altabstract}
This text deals with birationnal diffeomorphisms of real algebraic surfaces which have simple real dynamics and rich complex dynamics. We give an example of such a transformation on $\P^1\times\P^1$, then we show that this situation is exceptional and implies strong conditions on both the topology of the real locus and the real dynamics.
\end{altabstract}

\keywords{Dynamique complexe, dynamique réelle, surfaces, transformations birationnelles, ensemble de Fatou, degré dynamique, domaine de rotation}

\maketitle

\section*{Introduction}

Dans \cite{bedford-kim-maxent}, Bedford et Kim ont exhibé un exemple d'automorphisme d'une surface algébrique réelle $X$, construite par éclatements de $\P^2$, qui est d'entropie maximale, c'est-à-dire que l'entropie topologique vue comme transformation sur $X(\R)$ est égale à celle sur $X(\C)$, et celle-ci est strictement positive. En termes qualitatifs, cela signifie que toute la richesse de la dynamique complexe est contenue dans le lieu réel. Par exemple, la maximalité de l'entropie implique que tous les points périodiques hyperboliques sont réels, sauf éventuellement ceux qui sont contenus dans une courbe rationnelle périodique (voir \cite{cantat-survey}).

De manière inverse, on peut se demander si on peut avoir une dynamique complexe non triviale (entropie strictement positive) avec une dynamique réelle relativement simple. Pour préciser ce dernier point, on peut regarder l'entropie sur le lieu réel, comme par exemple dans \cite[\textsection 5.3]{volumes}. Ici, on s'intéresse à une condition plus forte que la nullité de l'entropie sur $X(\R)$, qui est l'inclusion du lieu réel dans l'ensemble de Fatou. On donne une réponse  positive à la question lorsque l'on étend l'étude non pas aux automorphismes (qui sont des morphismes biréguliers sur $X(\C)$), mais aux difféomorphismes birationnels.

\begin{defi}
Soit $X$ une variété algébrique réelle (projective lisse). Un \emph{difféo\-morphisme birationnel} de $X$ est une application birationnelle $f:X(\C)\dashrightarrow X(\C)$ qui préserve la structure réelle et telle que $f$ est birégulier en restriction à $X(\R)$.
\end{defi}

La restriction à $X(\R)$ est alors un difféomorphisme (réel-analytique), d'où leur nom. Notons que lorsque $X(\R)$ est non vide, la restriction de $f$ à $X(\R)$ détermine~$f$ de manière unique. Dans \cite{kollar-mangolte}, les auteurs montrent que tout difféomorphisme de~$X(\R)$ peut être approché par des difféomorphismes birationnels lorsque $X$ est une surface birationnelle à $\P^2$ (sur $\R$), mais qu'en revanche ce n'est pas le cas pour la plupart des autres surfaces algébriques réelles.

Soit $f$ une application biméromorphe d'une variété complexe compacte $X$. On note $\Ind(f)$ son lieu d'indétermina\-tion : c'est une sous-variété analytique complexe de codimension au plus $2$, donc constituée d'un nombre fini de points lorsque $X$ est une surface. Son ensemble de Fatou correspond au plus grand ouvert sur lequel la dynamique est \og non-chaotique\fg ; son complémentaire est l'ensemble de Julia. Il est moins aisé à définir pour les applications méromor\-phes que pour les applications holomorphes ; néanmoins on peut prendre la définition suivante.

\begin{defi}
Soit ${f:X\dashrightarrow X}$ une application biméromorphe d'une variété complexe compacte. L'\emph{ensemble de Fatou} de $f$, noté $\Fat(f)$, est le plus grand ouvert~$U$ tel que les deux conditions suivantes soient satisfaites :
\begin{enumerate}
\item $U\cap\Ind(f^n)=\emptyset$ pour tout $n\in\Z$ ;
\item la famille ${(f^n_{|U})}_{n\in\Z}$ est une famille normale, c'est-à-dire que toute suite à valeurs dans $\{f^n\,|\,n\in\Z\}$ possède une sous-suite qui converge uniformément sur les compacts de $U$.\footnote{Notez que l'on considère ici les itérés positifs et négatifs de $f$.}
\end{enumerate}
Les composantes connexes de $\Fat(f)$ sont appelées \emph{composantes de Fatou}.
\end{defi}

On rappelle aussi que le (plus grand) degré dynamique d'une application méromor\-phe d'une variété kählérienne compacte $f:X\dashrightarrow X$ est défini par la formule
\begin{equation}
\lambda(f):=\lim_{n\to+\infty}\left\|{(f^n)}^*\right\|^{1/n},
\end{equation}
où $(f^n)^*$ désigne l'action induite par $f^n$ sur la cohomologie de $X$. D'après \cite{gromov-entropie} et \cite{yomdin}, ce degré dynamique est l'exponentielle de l'entropie topologique lorsque~$f$ est holomorphe. Dans le cas méromorphe, on a seulement l'inégalité $\h(f)\leq\log\lambda(f)$, et l'égalité est conjecturée sous certaines hypothèses (voir \cite{friedland,ds-entropy,guedj-entropie}).

Avec ces définitions, le résultat annoncé, qui est démontré dans la partie \ref{section-exbir}, est le suivant.

\begin{theoa}\label{theoa-birdiff}
Il existe un difféomorphisme birationnel $f$ sur la surface algébrique réelle $X=\P^1\times\P^1$, dont le lieu réel est homéomorphe à un tore $\T^2$, telle que
\begin{enumerate}
\item Le degré dynamique $\lambda(f)$ est strictement supérieur à $1$.
\item $X(\R)$ est inclus dans l'ensemble de Fatou.
\end{enumerate}
\end{theoa}

Plus précisément, on montre que $X(\R)$ possède un voisinage (dans $X(\C)$) sur lequel~$f$ est biholomorphe et conjugué à une rotation sur un produit de couronnes dans~$\C^2$. Cette construction est inspirée de celle des anneaux de Herman à l'aide de produits de Blaschke (voir par exemple \cite[\textsection 15]{milnor}), qui produit des endomorphismes de $\P^1$ tels que le lieu réel est contenu dans un domaine sur lequel $f$ agit comme une rotation. La condition sur le degré dynamique est l'analogue du fait que les endomorphismes de Herman sont de degré plus grand que $2$.

En revanche, on montre dans la partie \ref{section-fatou-reel} que cette situation est exceptionnelle, et impose des contraintes topologiques fortes sur les composantes connexes de $X(\R)$ et la dynamique réelle de~$f$.

\begin{theoa}\label{car-neg}
Soit $f$ un difféomorphisme birationnel d'ordre infini d'une surface algé\-brique réelle $X$ (projective lisse). Supposons qu'une composante connexe $S$ de~$X(\R)$ soit contenue dans $\Fat(f)$. Alors :
\begin{enumerate}
\item Le nombre de points périodiques sur $S$ est égal à la caractéristique d'Euler topologique $\chi(S)$.
\item En particulier $\chi(S)\geq 0$, si bien que topologiquement $S$ est une sphère, un tore, un plan projectif, ou une bouteille de Klein.
\item Un itéré de $f$ est conjugué à une \og rotation\fg~sur l'une de ces surfaces.
\end{enumerate}
\end{theoa}

\begin{coro}
Soit $f$ un difféomorphisme birationnel d'une surface algébrique réelle $X$, et soit $S$ une composante connexe de $X(\R)$. On suppose que $f$ possède au moins trois points périodiques sur $S$. Alors $S$ n'est pas contenue dans l'ensemble de Fatou.
\end{coro}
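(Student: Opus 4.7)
The plan is to derive the corollary as a direct counting consequence of Theorem~B, arguing by contraposition. Assume for contradiction that the connected component $S$ is contained in $\Fat(f)$; I then seek an upper bound on the number of periodic points of $f$ on $S$ incompatible with the hypothesis of at least three.

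First I would dispense with the degenerate case where $f$ has finite order. If $f^n = \id$ for some $n\geq 1$, then $f$ is a biregular automorphism of $X(\C)$, the family of iterates $\{f^k\}_{k\in\Z}$ takes only finitely many distinct values, and $\Fat(f) = X(\C)$ trivially, while every point of $S$ is periodic. The corollary must therefore be read under the implicit hypothesis, shared with Theorem~B, that $f$ is of infinite order; I would make this explicit at the start of the proof.

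With $f$ of infinite order and $S \subset \Fat(f)$, Theorem~B applies. By item~(1), the set of periodic points of $f$ on $S$ has cardinality exactly $\chi(S)$. By item~(2), $S$ is homeomorphic to one of $\S^2$, $\T^2$, the real projective plane, or the Klein bottle, whose Euler characteristics are respectively $2$, $0$, $1$, $0$. In every case $\chi(S) \leq 2$, so $f$ admits at most two periodic points on $S$, contradicting the standing hypothesis of three or more.

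No technical obstacle is anticipated at the level of the corollary itself: the entire substance is already contained in Theorem~B, of which the corollary is essentially a transcription into a convenient contrapositive form. The only delicate point is cosmetic, namely the clarification of the finite-order case addressed above.
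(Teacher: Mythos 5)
Your proof is correct and is exactly the argument the paper intends (the corollary is stated without proof as an immediate consequence of Th\'eor\`eme~\ref{car-neg}): under the implicit infinite-order hypothesis, items (1) and (2) of that theorem give exactly $\chi(S)\le 2$ periodic points on $S$, contradicting the assumption of three or more, and you are right to flag that the finite-order case must be excluded. One small inaccuracy in your degenerate-case discussion: a birational map of finite order need not be biregular on the given model (it may still have indeterminacy points); what actually places $S$ in $\Fat(f)$ there is that $S$ avoids $\Ind(f^n)$ for all $n$ and the family of iterates is finite, hence normal.
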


\begin{rema}
Ce théorème est vérifié de manière triviale en dimension $1$ pour les automorphismes, car les seules possibilités pour que $f$ soit d'ordre infini et d'ensemble de Fatou contenant $X(\R)\neq\emptyset$ sont les suivantes :
\begin{enumerate}
\item $X\simeq\P^1$ et $f\in\PGL_2(\R)$ est donné par une matrice de rotation.
\item $X(\C)$ est un tore $\C/\Lambda$ et un itéré de $f$ est une translation réelle sur ce tore.
\end{enumerate}
\end{rema}

\begin{rema}
Le fait que $X$ soit algébrique n'intervient pas dans la démonstra\-tion du théorème \ref{car-neg}. Il suffit de prendre une surface complexe compacte $X$ munie d'une structure réelle (c'est-à-dire une involution anti-holomorphe $\sigma:X\to X$), et de supposer que $f$ est biméromorphe, préserve la structure réelle et n'a pas de point d'indétermination sur $X(\R)={\sf Fix}(\sigma)$.
\end{rema}

\begin{enonce*}[remark]{Questions ouvertes}
\begin{enumerate}
\item
Existe-t-il des exemples comme celui du théorème~\ref{theoa-birdiff} avec pour $X(\R)$ une sphère, un plan projectif ou une bouteille de Klein ? Notons qu'un exemple sur une sphère (resp. sur un plan projectif) suffirait pour avoir les deux autres (resp. la bouteille de Klein), par éclatement de point(s) fixe(s).
\item 
Existe-t-il des exemples comme celui du théorème \ref{theoa-birdiff} où $f$ est un automorphisme au lieu d'un difféomorphisme birationnel ?
\item
Que dire des endomorphismes de $\P^2$ ? Se peut-t-il aussi que le lieu réel soit inclus dans l'ensemble de Fatou ? Contrairement aux exemples de Herman en dimension 1, on a ici des contraintes topologiques, car le lieu réel n'est pas contractible dans $\P^2(\C)$. Par exemple, on ne peut pas avoir d'endomorphisme de degré topologique impair qui soit un difféomorphisme sur le lieu réel.
\item
Que peut-on dire en dimension supérieure ? Les techniques utilisées dans cet article permettent certainement d'obtenir des résultats plus ou moins similaires, en utilisant une version adaptée d'une formule de Lefschetz qui prenne en compte les sous-variétés de points fixes. Cependant, le manque d'exemple ne nous permet pas de dégager un énoncé précis.
\end{enumerate}
\end{enonce*}

\subsection*{Remerciements}
Je suis particulièrement reconnaissant envers mon directeur de thèse Serge Cantat pour m'avoir inspiré cette étude et guidé dans mes recherches. Merci aussi à Frédéric Mangolte pour des discussions intéressantes sur le sujet, et pour m'avoir invité à exposer ces résultats à Angers lors de la rencontre \emph{Fonctions régulues} en mars 2012.


\section{Un exemple de difféomorphisme birationnel du tore}\label{section-exbir}


\subsection{Conjugaison à une rotation}\label{paragraphe1.1}

On désigne par $\S^1$ l'ensemble des nombres complexes de module $1$, et par~$\T^2$ le tore $\S^1\times\S^1$. Pour $\theta=(\theta_1,\theta_2)\in\R^2$, on note $\Rot_\theta$ la rotation d'angle $\theta$ sur le tore $\T^2$, donnée par la formule
\begin{equation}
\Rot_\theta(x,y)=(\e^{i\theta_1}x,\e^{i\theta_2}y).
\end{equation}

Le théorème suivant est démontré par Herman dans \cite{herman-tore}, en reprenant des idées de Arnol$'$d \cite{arnold} et Moser \cite{moser} (voir aussi {\cite[A.2.2]{herman}}) :

\begin{theo}[Arnold, Moser, Herman]\label{theo-herman}
Soit $\alpha=(\alpha_1,\alpha_2)\in\R^2$ qui vérifie une \emph{condition diophantienne}, c'est-à-dire qu'il existe $C>0$ et~${\beta>0}$ tels que
\begin{equation}\label{diop}
\lvert k_1\alpha_1 + k_2\alpha_2 + 2\pi k_3 \rvert \geq \frac{C}{\left(\lVert k\rVert_\infty\right)^\beta} \quad\quad \forall k=(k_1,k_2,k_3)\in\Z^3\backslash\{0\}.
\end{equation}
Pour tout $\epsilon>0$, il existe alors un voisinage $\mathcal{U}_{\alpha,\epsilon}$ de $\Rot_\alpha$ dans le groupe $\Diff^\omega(\T^2)$ tel que
\begin{equation}\label{conj-herman}
\forall g\in\mathcal{U}_{\alpha,\epsilon},\,
\exists\theta\in\left]-\epsilon,\epsilon\right[^2,\,
\exists \psi\in\Diff^\omega(\T^2),\,
g = \Rot_\theta\circ\psi\circ \Rot_\alpha\circ\psi^{-1}.
\end{equation}
\end{theo}

À partir de maintenant, on se place sur la surface rationnelle réelle ${X=\P^1\times\P^1}$, dont le lieu réel $
{X(\R)=\P^1(\R)\times\P^1(\R)}
$
s'identifie au tore $\T^2=\S^1\times\S^1$ via la transformation de Cayley
\begin{equation}\begin{split}
\Psi : X(\R) & \longrightarrow \T^2 \\
(x,y) & \longmapsto \left( \frac{x-i}{x+i} , \frac{y-i}{y+i} \right).
\end{split}\end{equation}
Via cette transformation, la rotation $\Rot_\theta$ sur $\T^2$ correspond à un automorphisme de~$X$ que l'on note $R_\theta$, et qui est donné par la formule
\begin{equation}
R_\theta(x,y) = 
\left(
\frac{x+\tan(\theta_1/2)}{-x\tan(\theta_1/2)+1}
,
\frac{y+\tan(\theta_2/2)}{-y\tan(\theta_2/2)+1}
\right).
\end{equation}

Fixons un nombre $\alpha\in\left[0,2\pi\right]^2$ qui vérifie une condition diophantienne du type~$(\ref{diop})$, et soit $\mathcal{U}_{\alpha,\epsilon}$ le voisinage de $\Rot_\alpha$ dans $\Diff^\omega(\T^2)$ donné par le théorème de Herman--Arnold--Moser \ref{theo-herman}, où
\begin{equation}\label{condepsilon}
\epsilon = \max(|\pi-\alpha_1|,|\pi-\alpha_2|)>0.
\end{equation}
On note $\mathcal{V}_{\alpha}=\Psi^{-1}\mathcal{U}_{\alpha,\epsilon}\Psi$ le voisinage correspondant de $R_\alpha$ dans $\Diff^\omega(X(\R))$.

\begin{prop}\label{fatoucontientreel}
Soit $f$ un difféomorphisme birationnel de $X$ dont la restriction au lieu réel est dans $\mathcal{V}_{\alpha}$. Il existe alors ${\theta\in\left]-\epsilon,\epsilon\right[^2}$ tel que la transformation $R_\theta f$ soit analytiquement conjuguée, sur un voisinage $\Omega$ de $X(\R)$, à la rotation d'angle~$\alpha$ sur une bicouronne de $\C^2$. Autrement dit, il existe un difféomorphisme analytique complexe~${\psi:\mathcal{C}_1\times\mathcal{C}_2\overset{\sim}\rightarrow\Omega}$, où $\mathcal{C}_1$ et $\mathcal{C}_2$ sont des couronnes dans $\C$, tel que
\begin{equation}\label{form-conj-rot}
\psi^{-1}\circ(R_\theta f)\circ\psi\;(z_1,z_2)=({\rm e}^{i\alpha_1}z_1,{\rm e}^{i\alpha_2}z_2).
\end{equation}
En particulier $
X(\R)\subset\Omega\subset\Fat(R_\theta f).
$
\end{prop}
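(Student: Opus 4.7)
The plan is to transport the problem to $\T^2$ via the Cayley map $\Psi$, apply the Arnold--Moser--Herman theorem on $\T^2$ to obtain a real-analytic conjugation, and then complexify it using that $\T^2$ sits as a maximally totally real submanifold of $(\C^*)^2\subset X(\C)$.

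First I would set $\tilde f:=\Psi\circ f\circ\Psi^{-1}$ and observe that its restriction to $\T^2$ lies in $\mathcal{U}_{\alpha,\epsilon}$ by hypothesis. Theorem~\ref{theo-herman} then produces some $\theta_0\in\left]-\epsilon,\epsilon\right[^2$ and $\phi\in\Diff^\omega(\T^2)$ such that $\tilde f|_{\T^2}=\Rot_{\theta_0}\circ\phi\circ\Rot_\alpha\circ\phi^{-1}$; taking $\theta:=-\theta_0\in\left]-\epsilon,\epsilon\right[^2$ rearranges this into $\Rot_\theta\circ\tilde f|_{\T^2}=\phi\circ\Rot_\alpha\circ\phi^{-1}$, which is exactly (\ref{form-conj-rot}) restricted to $\T^2$ after conjugating back by $\Psi$.

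Next I would complexify $\phi$. Since $\T^2=\S^1\times\S^1$ is a real-analytic, maximally totally real submanifold of $(\C^*)^2$, $\phi$ extends uniquely to a holomorphic map on some open neighborhood of $\T^2$ in $(\C^*)^2$. By compactness of $\T^2$ this neighborhood contains a bi-annulus of the form $\mathcal{C}_1\times\mathcal{C}_2=\{1-\delta<|z|<1+\delta\}^2$; the same applying to $\phi^{-1}$, we may further shrink $\delta$ so that $\phi$ becomes a biholomorphism from $\mathcal{C}_1\times\mathcal{C}_2$ onto an open neighborhood of $\T^2$. The birational diffeomorphism hypothesis on $f$ is crucial here: it guarantees that $R_\theta f$, hence $\Rot_\theta\tilde f$, has no indeterminacy on $X(\R)$, and so extends holomorphically across a full neighborhood of $\T^2$. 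Shrinking $\delta$ once more, the composition $F:=\phi^{-1}\circ(\Rot_\theta\tilde f)\circ\phi$ becomes well-defined and holomorphic on $\mathcal{C}_1\times\mathcal{C}_2$, and coincides with $\Rot_\alpha$ on $\T^2$ by the previous paragraph.

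The key remaining step is the identity principle for holomorphic maps agreeing on a maximally totally real submanifold: since the difference $F-\Rot_\alpha$, read in any local holomorphic chart, is a holomorphic function vanishing on $\T^2$, it must vanish identically on the connected bi-annulus $\mathcal{C}_1\times\mathcal{C}_2$. Hence $F=\Rot_\alpha$ throughout. Setting $\psi:=\Psi^{-1}\circ\phi$ and $\Omega:=\psi(\mathcal{C}_1\times\mathcal{C}_2)$ then yields (\ref{form-conj-rot}), with $\Omega$ a neighborhood of $X(\R)$. The inclusion $\Omega\subset\Fat(R_\theta f)$ follows at once: on $\Omega$ the iterates of $R_\theta f$ are conjugate via $\psi$ to those of $\Rot_\alpha$, which form an equicontinuous (hence normal) family on any compact subset of $\mathcal{C}_1\times\mathcal{C}_2$, and no indeterminacy arises because $R_\theta f$ restricts to a biholomorphism on $\Omega$. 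The main difficulty I anticipate is organizational rather than conceptual: one must arrange the successive shrinkings of $\delta$ so that every composition remains on an honest product of annuli, and apply the totally real identity principle with the correct target. Beyond this bookkeeping, the essential ingredients are only Theorem~\ref{theo-herman} and the birational diffeomorphism hypothesis, the latter being exactly what prevents indeterminacy of $R_\theta f$ from accumulating on $X(\R)$ and so enables the entire complexification argument.
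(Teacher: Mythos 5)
Your proposal is correct and follows essentially the same route as the paper: invoke Theorem~\ref{theo-herman} to get the real-analytic conjugacy $R_\theta f_{|X(\R)}=\psi\circ\Rot_\alpha\circ\psi^{-1}$, complexify $\psi$ to a biholomorphism from a product of annuli onto a neighborhood $\Omega$ of $X(\R)$ avoiding the indeterminacy set, and conclude by analytic continuation from the totally real $\T^2$. The paper's version is just terser on the bookkeeping (shrinking the annuli, the identity principle on a maximally totally real submanifold) that you spell out.
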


\begin{proof}
Par construction du voisinage $\mathcal{V}_{\alpha}$, il existe~$\theta\in\left]-\epsilon,\epsilon\right[^2$ et un difféomorphisme réel-analytique $\psi:\T^2\overset{\sim}\to X(\R)$ tels que 
\begin{equation}
f_{|X(\R)}=R_{-\theta}\circ\psi\circ \Rot_\alpha\circ\psi^{-1}.
\end{equation}
Comme $\psi$ est analytique, il se prolonge en un difféomorphisme analytique complexe, toujours noté $\psi$, d'un produit de couronnes
\begin{equation}
\mathcal{C}_1\times\mathcal{C}_2 = \left\{
(z_1,z_2)\in\C^2,\;-\eta_i<\log|z_i|<\eta_i\quad\forall i\in\{1,2\}
\right\}
\end{equation}
vers un voisinage $\Omega$ de $X(\R)$ dans $X(\C)$. On peut supposer, quitte à réduire les~${\eta_i>0}$, que $\Omega$ ne contient pas de point d'indétermination pour $f$. On a alors la formule $(\ref{form-conj-rot})$ par prolongement analytique sur $\mathcal{C}_1\times\mathcal{C}_2$.
\end{proof}

Comme les difféomorphismes birationnels sont denses dans les difféomorphismes de~$X(\R)$ d'après \cite{kollar-mangolte}, on peut s'attendre à ce qu'il existe de telles transformations birationnelles $R_\theta f$ qui soient de degré dynamique strictement supérieur à un. C'est ce que nous allons montrer maintenant.


\subsection[Construction explicite d'un difféomorphisme birationnel]{Construction explicite d'un difféomorphisme birationnel de grand de\-gré dynamique}

Le lemme suivant, que l'on ne redémontre pas ici, donne une condition suffisante pour avoir un grand degré dynamique.

\begin{lemm}[{\cite[th. 3.1]{xie}}]\label{theo-xie}
Soit $X$ une surface rationnelle complexe, et soit $f:X\dashrightarrow X$ une application birationnelle de $X$. On fixe un $\R$-diviseur ample $L$ sur~$X$, et on suppose que
\begin{equation}
\deg_L(f^2)> C \deg_L(f),
\end{equation}
où $C=2^{3/2}3^{18}$, et $\deg_L(g)$ désigne le nombre d'intersection $g^*L\cdot L$. Alors 
\begin{equation}
\lambda(f) > \frac{\deg_L(f^2)}{C\deg_L(f)} > 1.
\end{equation}
\end{lemm}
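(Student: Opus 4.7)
Ma strat\'egie s'inspirerait de l'approche par g\'eom\'etrie hyperbolique de l'espace de Picard--Manin qui est au centre de l'article \cite{xie}. Soit $\mathcal{Z}(X)$ la compl\'etion inductive des $\NS(Y)_\R$ pour $Y\to X$ parcourant tous les mod\`eles birationnels lisses au-dessus de $X$; munie de la forme d'intersection, elle est de signature $(1,\infty)$, et sa nappe positive mod\'elise un espace hyperbolique $\mathbb{H}_{\mathcal{Z}}$ sur lequel $\Bir(X)$ agit par isom\'etries. Le degr\'e se lit alors via la formule
\begin{equation}
\deg_L(f^n)=(L\cdot L)\,\cosh d\bigl([L],(f^n)^*[L]\bigr),
\end{equation}
et $\lambda(f)$ est l'exponentielle de la longueur de translation $\tau(f^*)$ de l'isom\'etrie $f^*$ sur $\mathbb{H}_{\mathcal{Z}}$.

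Dans ce cadre, l'hypoth\`ese $\deg_L(f^2)>C\deg_L(f)$ se traduit en une condition sur les distances $d_1=d([L],f^*[L])$ et $d_2=d([L],(f^2)^*[L])$: essentiellement, $d_2-d_1$ est minor\'e par $\log\bigl(\deg_L(f^2)/(C\deg_L(f))\bigr)$. En appliquant la loi hyperbolique des cosinus au triangle de sommets $[L]$, $f^*[L]$, $(f^2)^*[L]$ (dont les deux c\^ot\'es adjacents \`a $f^*[L]$ sont de longueur $d_1$ par isom\'etrie), on relie $d_1$, $d_2$ et l'angle $\beta$ en $f^*[L]$. La condition impos\'ee sur le rapport des degr\'es force $\beta$ \`a \^etre proche de $\pi$, c'est-\`a-dire que l'orbite est presque g\'eod\'esique; il en r\'esulte que $f^*$ est une isom\'etrie hyperbolique, d'axe bien d\'efini, et que sa longueur de translation v\'erifie
\[
\tau(f^*)\geq\log\bigl(\deg_L(f^2)/(C\deg_L(f))\bigr).
\]

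La difficult\'e principale est d'\'etablir cette in\'egalit\'e de trigonom\'etrie hyperbolique de fa\c{c}on pr\'ecise et uniforme, notamment lorsque $[L]$ n'est pas sur l'axe de $f^*$: la distance $r$ entre $[L]$ et cet axe produit dans la formule
\[
\cosh d([L],(f^n)^*[L])=\cosh^2 r\cdot\cosh(n\tau)-\sinh^2 r
\]
des termes correctifs qui doivent \^etre absorb\'es dans la constante finale. C'est ce d\'ecompte soigneux, joint au contr\^ole du changement d'\'echelle entre $\NS(X)$ et la partie $\mathcal{Z}(X)$ sollicit\'ee par les r\'esolutions de $f$ et $f^2$, qui livre la valeur explicite $C=2^{3/2}3^{18}$. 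Une fois cet \'enonc\'e quantitatif acquis, la conclusion $\lambda(f)>\deg_L(f^2)/(C\deg_L(f))>1$ en d\'ecoule par passage \`a l'exponentielle.
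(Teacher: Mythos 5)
L'article ne d\'emontre pas ce lemme~: il le cite tel quel d'apr\`es Xie (th.~3.1 de \cite{xie}) et se contente d'indiquer, dans la remarque qui suit, que la preuve donn\'ee pour $X=\P^2$ et $L$ une droite se transpose au cadre plus g\'en\'eral de l'\'enonc\'e. Il n'y a donc pas de preuve interne \`a comparer \`a votre tentative~; la comparaison pertinente est avec la d\'emonstration de Xie, dont votre esquisse reproduit fid\`element la strat\'egie~: action isom\'etrique de $\Bir(X)$ sur l'espace hyperbolique de Picard--Manin, lecture du degr\'e via $\deg_L(f^n)=(L\cdot L)\cosh d([L],(f^n)^*[L])$, identification de $\lambda(f)$ avec l'exponentielle de la longueur de translation de $f^*$, puis trigonom\'etrie hyperbolique sur le triangle de sommets $[L]$, $f^*[L]$, $(f^2)^*[L]$ pour montrer que l'orbite est quasi-g\'eod\'esique et minorer la longueur de translation. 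Sur le plan de l'architecture, vous \^etes donc exactement sur la bonne voie.

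En tant que d\'emonstration, en revanche, votre texte s'arr\^ete pr\'ecis\'ement l\`a o\`u r\'eside la difficult\'e. L'in\'egalit\'e centrale $\tau(f^*)\geq\log\bigl(\deg_L(f^2)/(C\deg_L(f))\bigr)$ est annonc\'ee mais jamais \'etablie~: vous ne tirez pas de la loi des cosinus une minoration effective de l'angle $\beta$, vous ne passez pas de la condition locale (un angle proche de $\pi$ au point $f^*[L]$) \`a une condition valable le long de toute l'orbite $((f^n)^*[L])_{n}$ --- ce qui exige un argument de propagation, par r\'ecurrence ou par stabilit\'e des quasi-g\'eod\'esiques locales dans un espace ${\rm CAT}(-1)$ ---, et vous ne menez pas le calcul qui produit la constante explicite $C=2^{3/2}3^{18}$, alors que c'est ce calcul qui donne son contenu quantitatif au lemme~; vous reconnaissez d'ailleurs vous-m\^eme qu'il s'agit de \og la difficult\'e principale \fg. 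Signalons aussi que la formule $\cosh d([L],(f^n)^*[L])=\cosh^2 r\,\cosh(n\tau)-\sinh^2 r$ n'est exacte que si $f^*$ translate le long de son axe sans composante de rotation transverse, ce qui n'est pas automatique dans un espace hyperbolique de dimension infinie~; il faudrait la corriger ou s'en passer. En l'\'etat, votre proposition est un plan de preuve cr\'edible et conforme \`a la source cit\'ee, mais pas une d\'emonstration.
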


\begin{rema}
Lorsque $X=\P^2$ et $L$ est une droite, $\deg_L$ correspond au degré usuel d'une application rationnelle, c'est-à-dire le degré des polynômes homogènes $P$, $Q$ et $R$ tels que $f=[P:Q:R]$. Dans \cite{xie}, le lemme \ref{theo-xie} est démontré uniquement dans ce cadre, mais la démonstration qui en est faite se transpose naturellement au cas plus général donné ci-dessus. Nous allons l'appliquer avec $X=\P^1\times\P^1$ et $L$ un vecteur propre pour $f^*$ dans $H^2(X;\R)$\footnote{Contrairement à ce qui se passe pour les automorphismes, $f^*$ peut avoir des vecteurs propres qui sont des classes amples.}.
\end{rema}

Fixons $d$ un entier tel que 
\begin{equation}\label{condsurd}
d \geq \frac{\sqrt{C}}{2}=2^{-1/4} \times 3^9\approx 16551,4.
\end{equation}
Pour $n\in\N^*$, soit $F_n$ la fraction rationnelle suivante :
\begin{equation}
F_n(x)=\frac{x^{2d}+\frac{2}{n}x^d+1}{x^{2d}+1}.
\end{equation}
Ses zéros et ses pôles sont simples, et sont donnés par les ensembles
\begin{gather}
Z_n = \left\{
\e^{i(\pm\arccos(\frac{1}{n})+2k\pi)/d} \,\big|\, k\in\{0,\cdots,d-1\}
\right\},\\
P_n = \left\{
\e^{i(\pm\frac{\pi}{2}+2k\pi)/d} \,\big|\, k\in\{0,\cdots,d-1\}
\right\}.
\end{gather}
En particulier, $Z_n\cap P_n=\emptyset$ et $(Z_n\cup P_n)\subset\C\backslash\R$.
Soit $g_n:X\dashrightarrow X$ l'application rationnelle définie par
\begin{equation}
g_n(x,y)=(F_n(x)y,x).
\end{equation}
Cette application est birationnelle réelle, d'inverse $(x,y)\to(y,x/F_n(y))$. Les points d'indétermination de $g_n$ sont donnés par
\begin{equation}
\Ind(g_n) = \big(Z_n\times\{\infty\}\big) \cup \big(P_n\times\{0\}\big),
\end{equation}
et ceux de $g_n^{-1}$ sont donnés par
\begin{equation}
\Ind(g_n^{-1}) = (\{0\}\times Z_n) \cup (\{\infty\}\times P_n).
\end{equation}
En particulier, ces points d'indétermination ne sont pas réels, donc ${g_n}$ est un difféomor\-phisme birationnel de $X$. D'autre part, $\Ind(g_n)$ et $\Ind(g_n^{-1})$ ne s'intersectent pas, donc d'après \cite{diller-favre}, $(g_n^2)^*=(g_n^*)^2.$

\begin{theo}\label{fntheta}
Pour $n\in\N^*$ et $\theta=(\theta_1,\theta_2)\in\R^2$, on considère le difféomorphis\-me birationnel
\begin{equation}
f_{n,\theta}=R_\theta \circ g_n^2.
\end{equation}
On suppose que 
$\theta_j\neq \pi \mod 2\pi$ pour $j\in\{1,2\}$. Alors :
\begin{enumerate}
\item
$\Ind(f_{n,\theta})\cap\Ind(f_{n,\theta}^{-1})=\emptyset$, et donc $(f_{n,\theta}^2)^*=(f_{n,\theta}^*)^2$.
\item
$\lambda(f_{n,\theta})>1$.
\end{enumerate}
\end{theo}

\begin{proof}
Les points d'indétermination de $f_{n,\theta}$ sont donnés par
\begin{align}
\Ind(f_{n,\theta})&=\Ind(g_n^2)\\
&=\Ind(g_n)\cup g_n^{-1}\left(\Ind(g_n)\right)\\
&=\big(Z_n\times\{\infty\}\big)\cup\big(P_n\times\{0\}\big)\cup\big(\{\infty\}\times Z_n\big)\cup\big(\{0\}\times P_n\big),
\end{align}
et ceux de $f_n^{-1}$ par
\begin{align}
\Ind(f_{n,\theta}^{-1})&=R_\theta(\Ind(g_n^{-2}))\\
&=R_\theta(\Ind(g_n^{-1})\cup g_n(\Ind(g_n^{-1}))\\
&=R_\theta\big((\{0\}\times Z_n)\cup(\{\infty\}\times P_n)\cup(Z_n\times\{0\})\cup(P_n\times\{\infty\})\big)\\
\begin{split}
&=\big(\{t_{\theta_1}\}\times Z_{n,\theta_2}\big)\cup\big(\{-t_{\theta_1}^{-1}\}\times P_{n,\theta_2}\big)\\
&\quad\quad \cup\big(Z_{n,\theta_1}\times\{t_{\theta_2}\}\big)\cup\big(P_{n,\theta_1}\times\{-t_{\theta_2}^{-1}\}\big),
\end{split}
\end{align}
où $t_{\theta_j}=\tan(\theta_j/2)$, $Z_{n,\theta_j}=R_{\theta_j}(Z_n)0$, $P_{n,\theta_j}=R_{\theta_j}(P_n)$, et les $R_{\theta_j}$ sont les composantes de $R_\theta$.

La condition sur $\theta_j$ implique $t_{\theta_j}\neq\infty$. Comme de plus $t_{\theta_j}$ est réel, il n'est pas dans~$Z_n\cup P_n$. Ainsi, la seule possibilité pour que $\Ind(f_{n,\theta})$ et $\Ind(f_{n,\theta}^{-1})$ s'intersectent est d'avoir
\begin{equation}
t_{\theta_1}=0 \quad \text{et} \quad \big(Z_n\cap P_{n,\theta_2}\big)\cup\big(P_n\cap Z_{n,\theta_2}\big)\neq\emptyset,
\end{equation}
ou la même condition en inversant $\theta_1$ et $\theta_2$.

On note $\mathcal{R}$ le groupe $\{R_\theta\,|\,\theta\in\R\}$. Ce groupe agit sur $\P^1(\C)$, et l'orbite d'un point~$x\in\S^1\backslash\{i,-i\}$ est un cercle transverse à $\S^1$, qui n'intersecte $\S^1$ qu'aux points $x$ et $-1/x$. Les points $i$ et $-i$ sont quant à eux fixes par $\mathcal{R}$. Comme $Z_n$ et $P_n$ sont des sous-ensembles de $\S^1$ qui ne s'intersectent pas, et comme $P_n$ est stable par $x\mapsto-1/x$, on en déduit que $Z_n\cap(\mathcal{R}\cdot P_n)=\emptyset$. Par conséquent, les intersections ci-dessus sont toujours vides, et donc $\Ind(f_{n,\theta})\cap\Ind(f_{n,\theta}^{-1})=\emptyset$.

Notons $H=\P^1\times\{0\}$ et $V=\{0\}\times\P^1$ les diviseurs de $X$ dont les classes de Chern forment une base de $H^2(X(\C);\R)$. Dans cette base, la matrice de $g_n^*$ s'écrit
\begin{equation}\label{matriceA}
A=\begin{pmatrix}2d&1\\1&0\end{pmatrix}.
\end{equation}
En effet, il suffit de calculer les nombres d'intersection $g_n^*H\cdot V$, $g_n^*V\cdot V$, etc. qui correpondent aux degrés des fonctions coordonnées par rapport à $x$ et $y$.
Cette matrice admet $\lambda=d+\sqrt{d^2+1}$ pour plus grande valeur propre, et $L=\lambda H + V$ pour vecteur propre associé à $\lambda$. Notons que $L$ est une classe ample.

Comme $(g_n^2)^*=(g_n^*)^2$ et $R_\theta^*=\id$, la matrice de $f_{n,\theta}^*$ est la matrice $A^2$. De plus $(f_{n,\theta}^2)^*=(f_{n,\theta}^*)^2$, donc la matrice de $(f_{n,\theta}^2)^*$ est $A^4$. On en déduit, avec les notations du lemme \ref{theo-xie}, que $\deg_L(f_{n,\theta})=2\lambda^3$ et $\deg_L(f_{n,\theta}^2)=2\lambda^5$ (on a~${L^2=2\lambda}$). Ainsi,
\begin{equation}
\frac{\deg_L(f_{n,\theta}^2)}{\deg_L(f_{n,\theta})} = \lambda^2 \geq 4d^2 \geq 3^{18}\sqrt{2}
\end{equation}
d'après la condition $(\ref{condsurd})$. Le lemme \ref{theo-xie} implique alors $\lambda(f_{n,\theta})>1$.
\end{proof}


\subsection{Démonstration du théorème \ref{theoa-birdiff}}

Comme $g_n$ converge vers l'application $(x,y)\mapsto(y,x)$ dans $\Diff^\omega(X(\R))$, la suite $\left(f_{n,\alpha}\right)_{n\in\N^*}$ converge vers $R_\alpha$. Pour~$n$ suffisamment grand, les applications $f_{n,\alpha}$ sont donc dans le voisinage~$\mathcal{V}_{\alpha}$ de~$R_\alpha$ (défini au paragraphe~\ref{paragraphe1.1}). D'après la proposition \ref{fatoucontientreel}, il existe~${\theta\in\left]-\epsilon,\epsilon\right[^2}$ tel que $R_\theta f_{n,\alpha}=f_{n,\alpha+\theta}$ soit conjugué à une rotation dans un voisinage de $X(\R)$. D'après le choix de $\epsilon$ (cf.~$(\ref{condepsilon})$), la condition $\alpha_j+\theta_j\neq\pi \mod 2\pi$ du théorème \ref{fntheta} est satisfaite. On a donc :
\begin{align}
\lambda(f_{n,\alpha+\theta})>1
\quad \text{et} \quad
\Fat(f_{n,\alpha+\theta})\supset X(\R),
\end{align}
et le théorème \ref{theoa-birdiff} est démontré.
\qed


\section{Le cas général}\label{section-fatou-reel}

Soit $X$ une surface complexe compacte munie d'une structure réelle, et soit $f$ une application biméromorphe de $X$ compatible avec la structure réelle. On suppose que~$f$ est d'ordre infini et qu'une composante connexe $S$ de $X(\R)$ est contenue dans l'ensemble de Fatou (en particulier il n'y a pas de point d'indétermination sur $S$). Quitte à passer à un itéré, on peut supposer que $f(S)=S$. Nous allons montrer que :
\begin{enumerate}
\item $f$ possède exactement $\chi(S)$ points fixes sur $S$, quitte à le remplacer par un itéré ;
\item en particulier, $S$ est homéomorphe à une sphère, un tore, un plan projectif ou une bouteille de Klein ;
\item un itéré de $f$ est conjugué à une rotation sur $S$.
\end{enumerate}
Dans ce qui suit, on note $\Omega$ la composante de Fatou contenant $S$.


\subsection{Domaines de rotation}

Soit $\Omega$ une composante de Fatou telle que $f(\Omega)=\Omega$. Conformément à \cite{fs-higher1}, on dit que $\Omega$ est un \emph{domaine de rotation} (ou domaine de Siegel) lorsqu'il existe une suite ${m_k\to\pm\infty}$ telle que
\begin{equation}
f^{m_k}\mathop{\longrightarrow}\limits_{k\to+\infty}\id_\Omega
\end{equation}
uniformément sur les compacts de $\Omega$. On a la caractérisation suivante (voir aussi \cite[\textsection 1]{bedford-kim-rotation}) :

\begin{prop}\label{carac-rot}
Soit $\Omega$ une composante de Fatou fixe par $f$. On note $\mathcal{G}(\Omega)$ l'adhérence du sous-groupe engendré par $f$ dans $\Aut(\Omega)$, pour la topologie de la convergence uniforme sur les compacts. Les énoncés suivants sont équivalents :
\begin{enumerate}
\item $\Omega$ est un domaine de rotation pour $f$ ;
\item le groupe $\mathcal{G}(\Omega)$ est compact.
\end{enumerate}
Si ces conditions sont vérifiées, $\mathcal{G}(\Omega)$ est alors un groupe de Lie abélien compact, dont la composante connexe de l'identité~$\mathcal{G}(\Omega)^0$ est un tore réel $\T^d$. L'entier $d$ est appelé \emph{rang} du domaine de rotation.
\end{prop}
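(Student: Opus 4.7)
The easier direction, (2)$\Rightarrow$(1), rests on a Dirichlet-type pigeonhole argument: if $\mathcal{G}(\Omega)$ is compact and $f$ has infinite order, then the countable orbit $\{f^n\mid n\in\Z\}$ must have an accumulation point in $\mathcal{G}(\Omega)$; translating by a power of $f$, one produces a sequence $(m_k)$ with $|m_k|\to+\infty$ and $f^{m_k}\to\id_\Omega$ for the compact-open topology, which is exactly the rotation domain condition. (If $f$ has finite order, everything is trivial.)

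For (1)$\Rightarrow$(2), assume $f^{m_k}\to\id_\Omega$ uniformly on compacts, with $|m_k|\to+\infty$. Normality of $\{f^n\}_{n\in\Z}$ on the Fatou component $\Omega$ ensures that every sequence extracted from $\langle f\rangle$ admits a subsequence converging uniformly on compacts to a holomorphic map $g:\Omega\to X(\C)$. The crux is to show that each such $g$ in fact lies in $\Aut(\Omega)$. Given $f^{n_j}\to g$, I would extract a further subsequence so that $f^{-n_j}\to h$ is also holomorphic, then pass the identity $f^{n_j}\circ f^{-n_j}=\id$ to the limit to obtain $g\circ h=h\circ g=\id_\Omega$. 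The main obstacle is ensuring that $g(\Omega)\subset\Omega$ and $h(\Omega)\subset\Omega$, so that this composition is legitimate; one excludes limits landing in $\partial\Omega$ by combining the identity $f^{m_k}\to\id$ with Hurwitz-type openness arguments and the $f$-invariance of $\Omega$ as a Fatou component. Once this is established, $\mathcal{G}(\Omega)$ is sequentially compact in $\Aut(\Omega)$, hence compact.

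For the structural statement, $\mathcal{G}(\Omega)$ is abelian as the closure in a Hausdorff topological group of the abelian subgroup $\langle f\rangle$. To endow it with a Lie group structure, I would invoke the Bochner--Montgomery theorem: a compact group acting continuously and faithfully by holomorphic transformations on a complex manifold is a Lie group. Applied to the effective action of $\mathcal{G}(\Omega)$ on $\Omega$ by biholomorphisms, this yields the Lie structure. The standard classification of compact abelian Lie groups then implies that $\mathcal{G}(\Omega)^0$ is a real torus $\T^d$ for some integer $d\geq 0$.
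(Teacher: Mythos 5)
The direction (2)$\Rightarrow$(1) (recurrence in a compact group) and the structural part of your argument (abelianness of the closure of $\langle f\rangle$, Bochner--Montgomery, classification of compact abelian Lie groups) are correct and essentially what the paper does. The problem is in (1)$\Rightarrow$(2), where you take a more direct route and leave the decisive step unproved. You correctly identify the crux --- showing that every normal limit $g=\lim f^{n_j}$ maps $\Omega$ into $\Omega$ and is non-degenerate, so that composing with $h=\lim f^{-n_j}$ is legitimate and $g\in\Aut(\Omega)$ --- but the sentence about excluding limits landing in $\partial\Omega$ ``by combining $f^{m_k}\to\id$ with Hurwitz-type openness arguments and the $f$-invariance of $\Omega$'' is a placeholder, not an argument. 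Without the rotation hypothesis the claim is simply false (in the basin of an attracting fixed point the iterates converge to a constant map), so the hypothesis $f^{m_k}\to\id_\Omega$ must enter in an essential way exactly at this point; Hurwitz only yields the dichotomy ``$\det Dg$ nowhere zero or identically zero'' and does not by itself prevent $g(\Omega)$ from meeting $\partial\Omega$, and since $\partial\Omega$ need not be an analytic set, openness of $g$ does not immediately produce a contradiction either. As written, the implication is not established.

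The paper circumvents this difficulty by a different logical organization: it proves only that $\mathcal{G}(\Omega)$ is \emph{locally} compact, by adapting H.~Cartan's argument near the identity --- there the dichotomy is easy to resolve, because a limit uniformly close to $\id$ on a large compact set certainly sends points into $\Omega$ and has non-vanishing Jacobian there. Bochner--Montgomery then makes $\mathcal{G}(\Omega)$ an abelian Lie group, of the form $F\times\T^d\times\R^k$ with $F$ finite, and both conditions (compactness, and the existence of $m_k\to\pm\infty$ with $f^{m_k}\to\id_\Omega$) are read off simultaneously as being equivalent to $k=0$, using that $\langle f\rangle$ is dense in $\mathcal{G}(\Omega)$. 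If you want to keep your direct approach you must supply the full Cartan-type argument for arbitrary limits (as in Bedford--Kim's treatment of rotation domains); otherwise, switch to the local-compactness-first strategy, where the hard case of the dichotomy never arises.
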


\begin{proof}
On peut adapter la preuve d'un théorème de H. Cartan (voir par exemple \cite[chapitre 5]{narasimhan}) pour montrer que le groupe $\mathcal{G}(\Omega)$ est localement compact, en utilisant que ses éléments forment une famille normale en tant qu'applications holomorphes de $\Omega$ dans $X(\C)$\footnote{L'énoncé de Cartan concerne les domaines bornés de $\C^n$ ; le fait de considérer ici une famille normale remplace le théorème de Montel.}. De plus, ce groupe agit par difféomorphismes de classe $\mathcal{C}^2$ sur $\Omega$, et tout élément qui agit trivialement sur un ouvert est l'élément neutre. Un théorème de Bochner et Montgomery \cite{bm1} implique alors que $\mathcal{G}(\Omega)$ est un groupe de Lie. Comme de plus il est abélien, il est isomorphe à $F\times\T^d\times\R^k$, où $F$ est un groupe abélien fini. Chacune des deux assertions est alors équivalente à~${k=0}$.
\end{proof}

En dimension $1$, les domaines de rotation sont les disques de Siegel et les anneaux de Herman, et le groupe $\mathcal{G}(\Omega)^0$ est un cercle (cf. \cite{milnor}). En dimension $2$, on peut montrer que le rang des domaines de rotation est $1$ ou~$2$ pour les automorphismes d'entropie positive (voir \cite[théorème 1.6]{bedford-kim-rotation}). Dans \cite{mcmullen-k3-siegel}, McMullen donne des exemples de tels automorphismes sur des surfaces K3 non algébriques qui admettent un domaine de rotation de rang $2$ (voir aussi \cite{oguiso}). Sur les surfaces rationnelles, il existe des exemples de domaines de rotation de rang $1$ et $2$ (voir \cite{bedford-kim-maxent} et \cite{mcmullen-rat}).

\begin{prop}
Soit $S$ une composante connexe de $X(\R)$ telle que $f(S)=S$. On suppose que $S\subset\Fat(f)$, et on note $\Omega$ la composante de Fatou contenant $S$. Alors $\Omega$ est un domaine de rotation.
\end{prop}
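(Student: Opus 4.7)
L'approche consiste \`a produire directement une suite d'entiers $m_k \to +\infty$ telle que $f^{m_k}$ converge vers $\id_\Omega$ uniform\'ement sur les compacts de $\Omega$. Le point clef est de se servir du lieu r\'eel $S$, compact et totalement r\'eel de dimension r\'eelle maximale dans la surface complexe $X(\C)$, comme pont entre la dynamique topologique sur $S$ et la dynamique holomorphe sur $\Omega$.

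On commence par observer que $f(\Omega)=\Omega$ (car $f$ permute les composantes de Fatou et $f(S)=S$), et que chaque $f^n$ est holomorphe sur $\Omega$ puisque $\Omega\subset\Fat(f)$ \'evite tous les lieux d'ind\'etermination. La famille ${\{f^n_{|\Omega}\}}_{n\in\Z}$ \'etant normale, ses restrictions au compact $S$ forment une famille \'equicontinue d'hom\'eomorphismes $S\to S$. Le th\'eor\`eme d'Arzela--Ascoli fournit une sous-suite $n_k\to+\infty$ telle que $f^{n_k}_{|S}$ converge uniform\'ement vers un hom\'eomorphisme $\phi:S\to S$, et, quitte \`a extraire encore, $f^{-n_k}_{|S}$ vers $\phi^{-1}$. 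On pose alors $m_k=n_{k+1}-n_k$, de sorte que $f^{m_k}_{|S}=f^{n_{k+1}}_{|S}\circ f^{-n_k}_{|S}$ tende uniform\'ement vers $\id_S$.

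On montre ensuite que l'on peut supposer $m_k\to+\infty$. Si $(m_k)$ \'etait born\'ee, un entier $m$ y appara\^itrait infiniment souvent et l'on aurait $f^m_{|S}=\id_S$ ; le principe d'unicit\'e pour les applications holomorphes co\"incidant sur une sous-vari\'et\'e totalement r\'eelle de dimension r\'eelle maximale donnerait alors $f^m=\id$ sur un voisinage de $S$, puis sur $\Omega$ par connexit\'e, puis sur $X$ par prolongement bim\'eromorphe, ce qui contredit l'hypoth\`ese que $f$ est d'ordre infini.

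Il reste \`a relever la convergence observ\'ee sur $S$ \`a la composante $\Omega$ tout enti\`ere. Par normalit\'e de $\{f^n_{|\Omega}\}$, quitte \`a extraire encore, $f^{m_k}_{|\Omega}$ converge uniform\'ement sur les compacts vers une application holomorphe $h:\Omega\to X(\C)$, qui v\'erifie $h_{|S}=\id_S$. Le m\^eme argument d'unicit\'e analytique sur la sous-vari\'et\'e totalement r\'eelle $S$ donne $h=\id_\Omega$, et l'on a donc bien exhib\'e une suite $m_k\to+\infty$ avec $f^{m_k}\to\id_\Omega$, ce qui montre que $\Omega$ est un domaine de rotation. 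La difficult\'e principale r\'eside dans le recours, utilis\'e deux fois, au principe d'unicit\'e le long de $S$ : c'est ce prolongement analytique depuis une sous-vari\'et\'e totalement r\'eelle qui permet de transf\'erer l'information dynamique du compact $S$ \`a son voisinage $\Omega$.
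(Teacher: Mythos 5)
Votre d\'emonstration est correcte et suit pour l'essentiel la m\^eme voie que celle du texte : extraction de sous-suites convergentes de $(f^{n_k})$ et $(f^{-n_k})$, composition des limites sur le compact $S$ (o\`u elles sont \`a valeurs dans $S$) pour obtenir $f^{m_k}_{|S}\to\id_S$ avec $m_k=n_{k+1}-n_k$, puis prolongement analytique depuis la sous-vari\'et\'e totalement r\'eelle $S$ vers $\Omega$. La seule diff\'erence est mineure : pour garantir $m_k\to+\infty$, le texte se contente d'extraire d'embl\'ee une sous-suite \`a \'ecarts croissants, l\`a o\`u vous invoquez (l\'egitimement, mais inutilement) l'hypoth\`ese d'ordre infini de $f$.
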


\begin{proof}
Comme $\Omega$ est contenu dans l'ensemble de Fatou, il existe une suite $n_k\to+\infty$ telle que $f^{n_k}\to g$ uniformément sur les compacts de $\Omega$, avec ${g:\Omega\to X(\C)}$ holomorphe. Quitte à passer à une sous-suite, on peut également supposer que ${f^{-n_k}\to h}$ et $f^{m_k}\to i$, avec $m_k:=n_{k+1}-n_k\to+\infty$. En restriction à $S$, la convergence est uniforme et les fonctions $g$, $h$ et $i$ sont à valeurs dans $S$. On peut donc composer les limites dans les expressions $\id_S=f^{n_k}\circ f^{-n_k}$ et $f^{m_k}=f^{n_{k+1}}\circ f^{-n_k}$, ce qui donne $\id_S=g_{|S}\circ h_{|S}$ et $i_{|S}=g_{|S}\circ h_{|S}$. En particulier, $i_{|S}=\id_S$, et par prolongement analytique on en déduit que $i=\id_\Omega$. Ainsi $\Omega$ est un domaine de rotation.
\end{proof}


\subsection{Linéarisation au voisinage d'un point fixe}

Supposons qu'il existe un point fixe $x$ de $f$ sur $S$. Un argument de linéarisation dû là encore à H. Cartan montre que le groupe des germes en $x$ d'automorphismes de $\mathcal{G}(\Omega)$ est conjugué à un sous-groupe de $\GL_2(\R)$. Comme ce sous-groupe est compact, abélien et infini, il est lui-même conjugué à $\SO_2(\R)$. On obtient ainsi le résultat suivant.

\begin{prop}\label{conj-rot}
La restriction de $f$ à $S$ est conjuguée à une rotation irrationnelle au voisinage de chaque point fixe $x\in S$. En particulier, ces points fixes sont isolés, et l'endomorphisme~${({\rm d}f(x)-\id)}$ sur l'espace tangent en~$x$ est inversible et de déterminant positif.
\end{prop}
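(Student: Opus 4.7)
Le plan consiste à linéariser l'action du groupe compact $\mathcal{G}(\Omega)$ au voisinage de $x$ grâce à un théorème de Cartan, puis à identifier le sous-groupe de $\GL_2(\R)$ ainsi obtenu.

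Première étape : je vérifierais que le groupe $\mathcal{G}(\Omega)$ tout entier fixe $x$ et préserve $S$. En effet, si $g=\lim f^{n_k}$ uniformément sur les compacts de $\Omega$, alors $g(x)=\lim f^{n_k}(x)=x$, et $g(S)\subset S$ puisque $S$ est fermé et chaque $f^{n_k}$ préserve $S$. Comme $\mathcal{G}(\Omega)$ est un groupe de Lie compact opérant par difféomorphismes réel-analytiques sur $S$ en fixant $x$, un argument classique de linéarisation dû à H. Cartan (dans sa variante réel-analytique, à la Bochner--Montgomery) fournit une carte analytique centrée en $x$ dans laquelle l'action de $\mathcal{G}(\Omega)$ sur $S$ coïncide avec sa représentation tangente $\rho:\mathcal{G}(\Omega)\to\GL(T_xS)=\GL_2(\R)$.

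Deuxième étape : identifier $\rho(\mathcal{G}(\Omega))$ comme un sous-groupe compact, abélien et infini de $\GL_2(\R)$. La compacité et la commutativité sont immédiates. Pour le caractère infini, je remarquerais que $\rho(f)$ est d'ordre infini : si $\rho(f)^k=\id$ pour un entier $k\neq 0$, alors $f^k$ agirait comme l'identité dans la carte, donc sur un voisinage de $x$ dans $S$, donc sur $\Omega$ par prolongement analytique, donc sur $X$ par birationnalité --- contredisant l'ordre infini de $f$. Tout sous-groupe compact de $\GL_2(\R)$ étant conjugué à un sous-groupe de $\mathsf{O}_2(\R)$, et les éléments d'ordre infini de $\mathsf{O}_2(\R)$ ne pouvant vivre que dans $\SO_2(\R)$ (les réflexions étant d'ordre deux), on obtient que $\rho(f)$ est conjugué à une rotation $R_\theta$ avec $\theta/(2\pi)\notin\Q$. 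Le centralisateur d'une telle rotation dans $\mathsf{O}_2(\R)$ étant $\SO_2(\R)$, on en déduit que $\rho(\mathcal{G}(\Omega))$ tout entier est conjugué à un sous-groupe fermé infini de $\SO_2(\R)$, donc à $\SO_2(\R)$ lui-même.

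Les conclusions de la proposition en découlent : au voisinage de $x$ dans $S$, $f$ est conjugué à la rotation irrationnelle $R_\theta$, d'où $x$ est un point fixe isolé ; et $\det(\mathrm{d}f(x)-\id)$ est, par invariance par conjugaison dans $\GL_2(\R)$, égal à $\det(R_\theta-I)=2(1-\cos\theta)>0$. Le point délicat du plan est l'application du théorème de linéarisation de Cartan : il faut s'assurer que la carte peut être choisie de manière à préserver la structure réelle (fournissant ainsi un système de coordonnées réel-analytique sur $S$), puis disposer d'un voisinage complexe de $S$ dans $\Omega$ sur lequel les éléments de $\mathcal{G}(\Omega)$ se prolongent holomorphiquement, afin de justifier l'argument de prolongement analytique utilisé dans la deuxième étape.
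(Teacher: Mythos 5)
Votre démonstration est correcte et suit essentiellement la même démarche que celle du texte : linéarisation à la H.~Cartan du groupe compact $\mathcal{G}(\Omega)$ au point fixe, identification du groupe de germes comme sous-groupe compact, abélien et infini de $\GL_2(\R)$ donc conjugué à $\SO_2(\R)$, irrationalité de la rotation par prolongement analytique (sinon $f^k=\id$), puis le calcul $\det(R_\theta-I)=2(1-\cos\theta)>0$. Les précisions que vous ajoutez (fixation de $x$ par tout $\mathcal{G}(\Omega)$, argument de centralisateur, compatibilité de la carte avec la structure réelle) sont des détails que le texte laisse implicites, sans changer la nature de l'argument.
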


\begin{proof}
On a déjà vu que $f_{|S}$ est conjugué à une rotation dans un voisinage de $x$. Si celle-ci était d'ordre fini $k$, alors $f^k$ serait l'identité sur un voisinage de $x$, donc sur $X$ tout entier par prolongmement analytique, contredisant ainsi l'hypothèse sur l'ordre de $f$. Comme une rotation irrationnelle n'a pas de point fixe autre que l'origine, ceci implique que les points fixes sur $S$ sont isolés. Au voisinage d'un tel point $x$, $({\rm d}f(x)-\id)$ est conjugué à une matrice de la forme
\begin{equation}
\begin{pmatrix}
\cos(\theta)-1 & -\sin(\theta)\\
\sin(\theta) & \cos(\theta)-1
\end{pmatrix}
\end{equation}
avec $\theta\in\R\backslash 2\pi\Q$, qui a pour déterminant $2-2\cos(\theta)>0$.
\end{proof}


\subsection{Démonstration du théorème \ref{car-neg}}

Quitte à prendre un itéré de $f$, on peut supposer que $f$ est dans la composante connexe de l'identité $\mathcal{G}(\Omega)^0$ du groupe de Lie compact $\mathcal{G}(\Omega)$. Comme $\mathcal{G}(\Omega)^0$ agit par difféomorphismes sur $S$, on en déduit que la restriction de $f$ à $S$ est isotope à l'identité. La formule des points fixes de Lefschetz (voir par exemple \cite{griffiths-harris}) donne alors, en vertu de la proposition \ref{conj-rot} :
\begin{equation}\label{lef}
\chi(S) = {\sf card}\{x\in S\,|\,f(x)=x\}.
\end{equation}
En particulier, on obtient $\chi(S)\geq 0$, donc $S$ est une sphère, un tore, un plan projectif ou une bouteille de Klein.

Par ailleurs, comme $\mathcal{G}(\Omega)$ agit fidèlement sur $S$, le domaine de rotation $\Omega$ est de rang $1$ ou $2$. Le groupe de Lie $\mathcal{G}(\Omega)^0$ est donc soit un cercle $\S^1=\left\{z\in\C\,\big|\,|z|=1\right\}$, soit un tore $\T^2=\S^1\times\S^1$.

Si $\mathcal{G}(\Omega)^0\simeq\T^2$, alors l'orbite d'un point générique $x_0\in S$ est un tore de dimension~$2$, et le difféomorphisme
\begin{equation}
\begin{split}
\psi : \mathcal{G}(\Omega)^0\simeq\T^2 &\longrightarrow S\\
g & \longmapsto g(x_0),
\end{split}
\end{equation}
conjugue l'action par translations de $\T^2$ sur lui-même à l'action de $\mathcal{G}(\Omega)^0$ sur~$S$. En particulier, $f_{|S}$ est conjugué à une rotation.

On considère maintenant le cas où $\mathcal{G}(\Omega)^0$ est le groupe $\S^1$. On utilise le lemme suivant, qui est sans doute bien connu, mais difficile à localiser dans la littérature. On en donne une démonstration en annexe.

\begin{lemm}\label{lemm-action}
Soit une action de classe $\mathcal{C}^1$, fidèle et sans point fixe, du groupe $\S^1$ sur le cylindre $\mathcal{C}=[0,1]\times\S^1$ ou sur le ruban de Möbius ${\mathcal{M}=\mathcal{C}/s}$, où $s$ est l'involution $(t,z)\mapsto(1-t,{\e}^{i\pi}z)$. Alors cette action est conjuguée à l'action standard de $\S^1$ par rotations :
\begin{equation}
{\rm e}^{i\theta}\cdot(t,z)=(t,{\rm e}^{i\theta}z).
\end{equation}
\end{lemm}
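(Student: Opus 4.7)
My plan is to classify the $\S^1$-orbits and the orbit space, trivialize the action on the cylinder, and then reduce the M\"obius band case to the cylinder by passing to the orientation double cover. Since $\S^1$ is compact, every orbit is a compact $1$-submanifold, and the hypothesis that there are no fixed points forces each orbit to be a circle, with stabilizer a proper (hence finite cyclic) closed subgroup of $\S^1$. An equivariant slice theorem, applied after a Bochner-type linearization of the $C^1$-action, would model the action near each orbit as $\S^1 \times_H V$, with $H$ the stabilizer and $V$ a one-dimensional slice. Faithfulness of the global action together with connectedness of the manifold would force $H$ to act faithfully on $V$, so $H = \{1\}$ (a free orbit) or $H = \Z/2\Z$ (a singular orbit whose neighborhood is a M\"obius band).

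For the cylinder $\mathcal{C}$, each boundary circle is a single orbit---a fixed-point-free continuous $\S^1$-action on $\S^1$ is necessarily transitive---so the orbit space is a compact connected $1$-manifold with two boundary points, namely $[0,1]$. Orientability of $\mathcal{C}$ rules out a singular orbit, whose neighborhood would be a non-orientable M\"obius band embedded in $\mathcal{C}$, so the action is free. The resulting principal $\S^1$-bundle over the contractible base $[0,1]$ is trivial, and a trivialization is the required equivariant diffeomorphism to $[0,1] \times \S^1$ conjugating the action to the standard rotation.

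For the M\"obius band $\mathcal{M}$, I would use the connectedness of $\S^1$ to lift the action to the orientation double cover $\mathcal{C}$, where it automatically commutes with the deck involution $s$. Applying the previous step, there is an equivariant diffeomorphism $\Phi\colon \mathcal{C} \to [0,1] \times \S^1$ conjugating the lifted action to rotations; the pushed-forward involution $\sigma = \Phi \circ s \circ \Phi^{-1}$ is then a free $\S^1$-equivariant involution that must swap the two boundary circles of $\mathcal{C}$, since $\partial\mathcal{M}$ is connected. Equivariance forces $\sigma(t,z) = (f(t), h(t)\,z)$ for some $C^1$ involution $f$ of $[0,1]$ exchanging the endpoints and some $h\colon [0,1]\to \S^1$, and a further equivariant change of coordinates of the form $(t,z)\mapsto(\phi(t),\mu(t)z)$ should absorb $f$ and $h$ and conjugate $\sigma$ to the standard involution $(t,z)\mapsto(1-t,-z)$. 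Descending back to $\mathcal{M}$ then yields the standard rotation action.

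The main obstacle will be the low regularity hypothesis: the slice theorem, the averaging of an invariant Riemannian metric, and the triviality of the principal bundle are all classical only for smooth actions. For a merely $C^1$ action I would invoke the Bochner--Montgomery theorem (already used in the proof of Proposition \ref{carac-rot}) to gain enough regularity in the group variable, then average a $C^1$ Riemannian metric to obtain an invariant one whose orthogonal complement to the orbits furnishes the required slice; the final change of coordinates in the M\"obius band step then reduces to a routine ODE argument on the function $h$, which I expect to be smooth wherever $\sigma$ is.
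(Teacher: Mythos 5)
Your plan is correct in substance, but it follows a genuinely different route from the paper. You invoke the general theory of compact transformation groups: Bochner linearization and the slice theorem to classify orbits (free or $\Z/2$-exceptional), the orbit space as a compact $1$-manifold with boundary, orientability to exclude exceptional orbits in $\mathcal{C}$, and triviality of principal $\S^1$-bundles over $[0,1]$; for $\mathcal{M}$ you lift to the orientation double cover and then normalize the deck involution equivariantly. The paper instead argues by hand in dimension $2$: the action is generated by a nonvanishing vector field $\overrightarrow{V}$ whose trajectories are essential circles (Poincar\'e--Bendixson); rotating $\overrightarrow{V}$ by $\pi/2$ with respect to a fixed auxiliary metric gives a transverse field $\overrightarrow{W}$ whose trajectories meet each orbit at most once (an entering/exiting argument across each orbit) and must reach the opposite boundary (Poincar\'e--Bendixson again, a limit cycle being excluded by the intersection bound); such a trajectory is a global transversal $\gamma$, and $(t,\e^{i\theta})\mapsto \e^{i\theta}\cdot\gamma(t)$ is the conjugacy. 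For the M\"obius band the paper also lifts to $\mathcal{C}$ (via a $\pi_1$ argument equivalent to your orientation-cover lifting), but then finds an $s$-invariant orbit by the intermediate value theorem on the orbit space and cuts and reglues along it, rather than normalizing $\sigma$ globally as you do. Your approach buys generality, and your explicit normalization of the involution to $(t,z)\mapsto(1-t,-z)$ is more detailed than the paper's regluing step; the paper's approach buys exactly what you flag as your main obstacle: since $\overrightarrow{W}$ is orthogonal for a fixed smooth metric rather than an averaged invariant one, the global transversal comes from a $C^1$ ODE, bypassing the slice theorem and the fact that averaging a metric under a merely $C^1$ action yields only a $C^0$ invariant metric, whose orthogonal line field need not be uniquely integrable. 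Note that for a free action an arbitrary (non-invariant) global transversal already trivializes the bundle, so you could shortcut your own argument the same way; and both proofs leave implicit the same residual regularity point, namely the smoothness of the generating vector field of a $C^1$ action (Bochner--Montgomery).
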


Si $S$ est orientable, la formule de Lefschetz $(\ref{lef})$ montre que $f$ possède $2$ points fixes sur $S$ si $S$ est une sphère, ou $0$ point fixe si $S$ est un tore. Dans le premier cas, $f$ est conjugué à une rotation dans des petits disques au voisinage de chaque point fixe ; en retirant ces disques, on obtient un cylindre $S'\subset S$ sur lequel $\S^1$ agit fidèlement et sans point fixe. Dans le second cas, en découpant le tore $S$ suivant une orbite de~$\S^1$ (celle-ci est une courbe fermée simple non contractible, car sinon il y aurait un point fixe dans le disque bordé par cette courbe), on obtient aussi un cylindre $S'$ sur lequel $\S^1$ agit fidèlement et sans point fixe. On sait d'après le lemme \ref{lemm-action} que l'action de $\S^1$ sur $S'$ est conjuguée à une rotation, et en recollant, on obtient une action par rotation sur $S$.

Il reste à traiter le cas où $S$ est non orientable. Si $S$ est un plan projectif,~le groupe $\S^1$ possède $\chi(S)=1$ point fixe sur $S$, autour duquel l'action est linéarisable et conjuguée à une rotation. En enlevant un petit disque autour de ce point fixe, on obtient un ruban de Möbius, sur lequel $f$ agit par rotations d'après le lemme \ref{lemm-action}. Enfin, si $S$ est une bouteille de Klein, on obtient de même un ruban de Möbius en découpant le long d'une orbite, et $f$ agit par rotations sur ce ruban. Le résultat s'en déduit par recollement.
\qed

\appendix

\section{Démonstration du lemme \ref{lemm-action}}

\begin{enumerate}
\item
On traite d'abord le cas où $\S^1$ agit sur le cylindre. Cette action provient d'un champ de vecteurs $\overrightarrow V$ de classe $\mathcal{C}^1$ sur $\mathcal{C}$. Comme l'action est sans point fixe, $\overrightarrow V$ ne s'annule pas. En particulier, les orbites sous l'action de~$\S^1$, qui sont les trajectoires du champ de vecteurs, sont des cercles homotopes aux bords du cylindre (c'est une conséquence facile du théorème de Poincaré--Bendixson).

En faisant tourner ce champ de vecteurs d'un angle positif de $\pi/2$, on obtient un champ de vecteurs orthogonal~$\overrightarrow W$. Chaque trajectoire $\gamma$ de $\overrightarrow V$ découpe $\mathcal C$ en deux cylindres $\mathcal{C}_1$ et $\mathcal{C}_2$ (éventuellement un des deux est un cercle si $\gamma$ est un des deux bords), avec le champ de vecteurs $\overrightarrow W$ qui est rentrant le long de~$\gamma$ sur~$\mathcal{C}_1$, et sortant sur $\mathcal{C}_2$. Ainsi, toute trajectoire de $\overrightarrow W$ sur $\mathcal{C}_1$ (resp. $\mathcal{C}_2$) partant d'un point de $\gamma$ (resp. terminant sur un point de $\gamma$) ne peut pas revenir sur le bord $\gamma$ une seconde fois, à cause de l'orientation de $\overrightarrow W$. On en déduit le fait suivant :
\begin{center}
Les trajectoires des champs $\overrightarrow V$ et $\overrightarrow W$ se coupent en au plus un point.
\end{center}

Considérons une trajectoire $\gamma$ pour $\overrightarrow W$, que l'on fait partir de l'un des deux bords du cylindre (celui pour lequel le champ de vecteurs $\overrightarrow W$ est rentrant). Comme cette trajectoire ne peut pas revenir sur le premier bord, a priori deux cas sont possibles :
\begin{enumerate}
\item\label{cas-a}
La trajectoire $\gamma$ atteint le deuxième bord du cylindre en un temps fini $t_1>0$.
\item
La courbe $\gamma$ est définie pour tout $t\geq 0$, et reste dans l'intérieur du cylindre. En vertu du théorème de Poincaré--Bendixson, cette trajectoire admet un point limite ou un cycle limite $\Gamma$. Comme le champ $\overrightarrow W$ ne s'annule pas, il ne peut s'agir que d'un cycle limite $\Gamma$, qui ne peut être homotopiquement trivial, donc qui est homotope au bord du cylindre. Or l'orbite sous $\S^1$ d'un point de $\Gamma$ est également un cercle homotope à $\Gamma$, qui coupe $\Gamma$ transversalement en au plus un point, d'après le fait énoncé plus haut : ceci est impossible.
\end{enumerate}
On est donc dans le cas (a). Quitte à renormaliser et à inverser la trajectoire, on a ainsi construit une courbe $\gamma:[0,1]\to \mathcal{C}$ telle que
\begin{itemize}
\item $\gamma(0)\in\{0\}\times\S^1$ et $\gamma(1)\in\{1\}\times\S^1$ ;
\item toute orbite coupe $\gamma$ exactement une fois, et de manière transverse.
\end{itemize}
Le difféomorphisme
\begin{equation}
\begin{split}
\psi : [0,1]\times\S^1 &\longrightarrow \mathcal{C}\\
(t,{\rm e}^{i\theta}) & \longmapsto {\rm e}^{i\theta}\cdot \gamma(t),
\end{split}
\end{equation}
conjugue alors l'action standard de $\S^1$ sur $[0,1]\times\S^1$ à notre action sur $\mathcal{C}$.

\item Considérons maintenant une action fidèle et sans point fixe de $\S^1$ sur le ruban de Möbius $\mathcal{M}$. On note $\pi:\mathcal{C}\to\mathcal{M}$ le revêtement double correspondant à l'involution~${s:(t,z)\mapsto(1-t,{\e}^{i\pi}z)}$ sur $\mathcal{C}$.

Pour des raisons topologiques, le bord $\partial\mathcal{M}$ est nécessairement une orbite sous l'action de $\S^1$. Toute orbite est homotope à ce bord, donc dans $\pi_*\left(\pi_1(\mathcal{C})\right)$. On en déduit que l'action de $\S^1$ sur $\mathcal{M}$ se relève en une action de $\S^1$ sur $\mathcal{C}$, nécessairement fidèle et sans point fixe (l'action est fidèle, car elle l'est en restriction à chacun des bords). D'après le premier point, l'espace des orbites pour cette dernière action s'identifie avec le segment $[0,1]$, et l'involution $s$ induit une involution (non triviale) sur les orbites : par continuité, il existe donc une orbite $\gamma$ qui est fixée par $s$.

En découpant $\mathcal{C}$ le long de cette orbite $\gamma$, on obtient deux cylindres identiques $\mathcal{C}_1$ et $\mathcal{C}_2$, et le ruban de Möbius $\mathcal{M}$ est réobtenu à partir d'un tel cylindre en recollant les deux moitiés de $\gamma$ (voir figure \ref{fig-mobius}). Il suffit maintenant d'appliquer le premier cas à l'un de ces cylindres $\mathcal{C}_i$, puis de recoller de la manière que nous venons d'indiquer pour obtenir la conjugaison voulue.\qed

\begin{figure}[h]
\begin{center}
\includegraphics[scale=0.6]{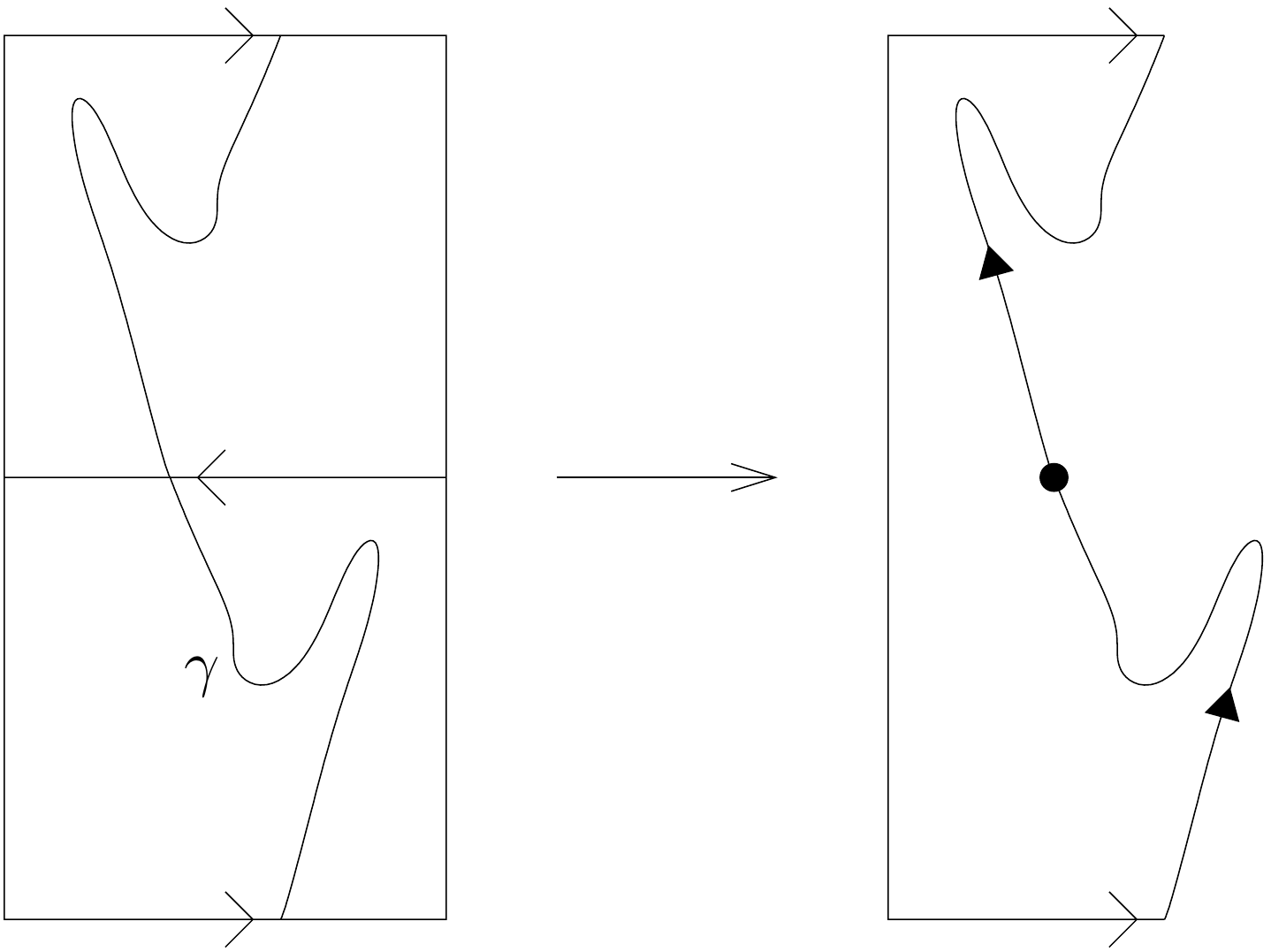}
\end{center}
\caption{Le découpage et recollage du ruban de Möbius.}\label{fig-mobius}
\end{figure}

\end{enumerate}

\bibliographystyle{smfalpha}
\bibliography{biblio}

\end{document}